\newcounter{asyfigcntr}
\theoremstyle{plain}
\newtheorem{thm}{Theorem}
\newtheorem{lem}{Lemma}
\newtheorem{cor}{Corollary}
\theoremstyle{definition}
\newcommand{\vll}{\operatorname{area}}
\newcommand{\cnv}{\operatorname{conv}}
\newcommand{\mx}{\text{max}}
\newcommand{\eps}{\varepsilon}
\begin{document}

\title[Inextensible domains]{Inextensible domains}
%{and domains with a circle of outer billiard triangles}

\author{Yoav Kallus}
\address{Yoav Kallus, Center for Theoretical Science, Princeton University, Princeton, New Jersey 08544}

\date{\today}

\begin{abstract}
We develop a theory of planar, origin-symmetric, convex domains that are inextensible with
respect to lattice covering, that is, domains such that augmenting them in any way allows
fewer domains to cover the same area. We show that origin-symmetric inextensible domains
are exactly the origin-symmetric convex domains with a circle of outer billiard triangles.
We address a conjecture by Genin and Tabachnikov about convex domains, not necessarily
symmetric, with a circle of outer billiard triangles, and show that it follows immediately
from a result of Sas.
\end{abstract}

\maketitle

In a series of papers from 1946 to 1947, Kurt Mahler developed a theory of planar, origin-symmetric, star-like
domains that are irreducible with respect to lattice packing, that is, domains such that taking away any piece
allows more domains to be packed in the same area \cite{mahler88,mahler95}.
In particular, he was interested in the problem of identifying convex domains such that every convex
domain with lower area may be packed at a greater number per unit area than the domain in question. This
is the subject of Reinhardt's conjecture \cite{Reinhardt}. In particular, Mahler showed that the disk is
not such a domain, even if the domains of lower area to which we compare it are restricted to a small
neighborhood of the disk with respect to Hausdorff distance \cite{mahler95}. 

Inspired by the work of Mahler, we develop a theory of planar, origin-symmetric, convex domains that are 
inextensible with respect to lattice covering, that is, domains such that adding any piece
allows fewer domains to cover the same area. We find that the inextensible domains are simply
those with a circle of outer billiard triangles, a family of domains studied previously by Genin
and Tabachnikov \cite{genin}. The analogue of Reinhardt's domain for covering is simply
the ellipse: any origin-symmetric convex domain can cover the plane with
no larger number per unit area than an ellipse of the same area, 
as can be easily shown using a classical result of Sas \cite{sas,toth-cover}.
In other words, the ellipse covers the plane with the least efficiency.

Genin and Tabachnikov conjecture that out of convex domains, not necessarily symmetric,
with a circle of critical triangles of a fixed area, the ellipse
provides the upper bound for the area of the domain \cite{genin}.
This also follows easily from the result of Sas.

We call $K$ a symmetric convex domain (below just ``domain''), if $K$ is a convex
compact subset of $\mathbb{R}^2$ such that $K=-K$. A lattice $\Lambda=B\mathbb{Z}^2$ is the image
of the integer lattice $\mathbb{Z}^2$ under a nonsingular linear map $B$. The determinant $d(\Lambda)$
of a lattice $\Lambda=B\mathbb{Z}^2$ is given by $|\det(B)|$ and is independent of the basis $B$ used. We will
use the Hausdorff metric as a distance between domains: $\delta(K,K')=\min\{\eps:K\subseteq K'+
\eps D,K'\subseteq K+\eps D\}$, where $D$ is the unit disk. As the distance between lattices we use
the distance between the closest two bases: $\delta(\Lambda,\Lambda')=\min\{||B-B'||:
\Lambda=B\mathbb{Z}^2,\Lambda=B'\mathbb{Z}^2\}$, where $||\cdot||$ is the Hilbert-Schmidt norm.

The lattice $\Lambda$ is called $K$-covering if $K+\Lambda=\mathbb{R}^2$. Of all $K$-covering lattices
there is at least one lattice $\Lambda_c$ such that $d(\Lambda)\le d(\Lambda_c)$
whenever $\Lambda$ is $K$-covering \cite{Gruber}. We call such a lattice a critical lattice for $K$ and its determinant
is called the critical determinant of $K$ and denoted
$\Delta(K)$. Since every point of the unit cell of the lattice is covered at least singly, $\Delta(K)\le\vll(K)$,
with equality if $K$ tiles the plane.
Whenever $K'\supseteq K$, any critical lattice of $K$ is also $K'$-covering, and therefore
$\Delta(K')\ge\Delta(K)$. We say that $K$ is extensible if there is a domain $K'$ containing $K$ but different
from it that has the same critical determinant as $K$. Otherwise, we say $K$ is inextensible.

\begin{figure}
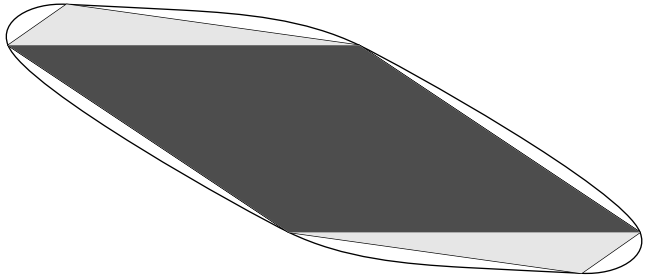
\begin{center}
\begin{asy}
import cseblack;
import olympiad;
usepackage("amssymb");
size(240);

real a=0.3;
real b=.8;

path A=D((-a,-b)--(3-a,-b)--(a,b)--(a-3,b)--cycle);
//path B=D((-a,-b){(-2*a+3,-2*b)}..(-a+1.5,-b-0.6)..(-a+2.,-b-0.7)..{-3+2*a,+2*b}(3-a,-b)--(a,b){(2*a-3,2*b)}..(a-1.5,b+0.6)..(a-2.,b+0.7)..{3-2*a,-2*b}(a-3,b)--cycle);
//path B=D((-a,-b)..(-a+1.5,-b-0.6)..(-a+2.,-b-0.7)..(3-a,-b)..tension atleast 2 ..(a,b)..(a-1.5,b+0.6)..(a-2.,b+0.7)..(a-3,b)..tension atleast 2 ..cycle);
path B=D((-a,-b)..(-a+1.5,-b-0.3)..(-a+2.5,-b-0.35)..(3-a,-b)..tension atleast 2.5 ..(a,b)..(a-1.5,b+0.3)..(a-2.5,b+0.35)..(a-3,b)..tension atleast 2.5 ..cycle);
path C=D((-a,-b)--(-a+2.5,-b-0.35)--(3-a,-b)--(a,b)--(a-2.5,b+0.35)--(a-3,b)--cycle);
//fill(B,lightgray);
fill(C,lightgray);
fill(A,gray(0.3));
\end{asy}
\caption{\label{parafig}
The parallelogram $K$ (dark gray) and a proper superdomain $K'$ (white). Any such superdomain
includes a hexagonal (or degeneratly, parallelogrammatic) superdomain (light gray),
so the parallelogram is inextensible.}
\end{center}\end{figure}

Let us begin with an example of an inextensible domain: a parallelogram (see Figure \ref{parafig}). Since
the parallelogram tiles the plane, the critical
determinant of a parallelogram is equal to its area. Let $K$ be a parallelogram
and let $K'$ be a proper superdomain. By definition there must be a point $\mathbf{x}\in K'\setminus K$.
Let $K''=\cnv(K,\pm\mathbf{x})$. 
Since $K''$ is either a parallelogram of a hexagon, its critical determinant is equal to its area and
therefore $\Delta(K)<\Delta(K'')\le\Delta(K')$.

\begin{figure}
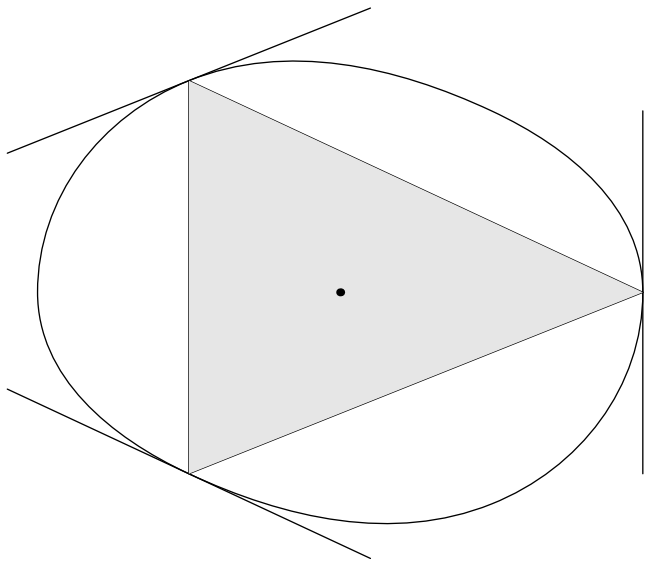
\begin{center}
\begin{asy}
import cseblack;
import olympiad;
usepackage("amssymb");
size(240);

path T=D((1,0)--(-0.5,0.7)--(-0.5,-0.6)--cycle);
path A=D((1,0){0,1}..{-1.5,0.7}(0.5,0.6){-1.5,0.7}..{-1.5,-0.6}(-0.5,0.7){-1.5,-0.6}..{0,-1}(-1,0){0,-1}..{1.5,-0.7}(-0.5,-0.6){1.5,-0.7}..{1.5,0.6}(0.5,-0.7){1.5,0.6}..{0,1}(1,0));
D((1,0.6)--(1,-0.6));
D( ((-0.5,0.7)+0.4*(-1.5,-0.6))--((-0.5,0.7)-0.4*(-1.5,-0.6)) );
D( ((-0.5,-0.6)+0.4*(-1.5,0.7))--((-0.5,-0.6)-0.4*(-1.5,0.7)) );
fill(T,lightgray);
D((0,0));
\end{asy}
\caption{\label{critfig}
A convex, centrally symmetric domain, with one its critical triangles (light gray). The triangle
must contain the origin in its interior (unless the domain is a parallelogram) and there must be
support lines through each of the three vertices that are parallel to the opposite sides of the
triangle.}
\end{center}\end{figure}

Bambah and Rogers have showed that if $T$ is a triangle inscribed in $K$, maximizing the area among all
triangles inscribed in $K$, then the critical determinant of a domain $K$ is equal to twice the area of $T$
\cite{bambah}. We call such a triangle a critical triangle of $K$ (see Figure \ref{critfig}). Consequently, any lattice which
is critical for the (possibly degenerate) hexagon $\cnv(T,-T)$ is critical for $K$.
Any critical triangle $T$ must include the origin, otherwise a reflection of one of its
vertices through the opposite side yields a triangle of equal area but with one vertex in the interior of $K$.
When $K$ is a parallelogram, $T$ may have the origin on its perimeter.
When $K$ is not a parallelogram, the origin must be in the interior of $T$, since otherwise the hexagon
$\cnv(T,-T)$ degenerates to a parallelogram which is simultaneously a proper subdomain of $K$ and of
equal critical determinant, contradicting the inextensibility of parallelograms. Similarly,
the hexagon $\cnv(T,-T)$ must also not degenerate to a parallelogram by having one of the
vertices of $T$ lie on a side of $-T$ and therefore the relative interiors of all sides of $T$ must be interior
to $K$. 
%Since the parallelogram
%is a special case, we implicitly assume for the remainder of this section that $K$ is not a parallelogram.

Let $\mathbf{u}_\theta=(\cos\theta,\sin\theta)$ be a point on the unit circle at angle $\theta$ from
the $x$-axis. The support height of a domain $K$ in the direction $\mathbf{u}_\theta$ is given by
$h(\theta)=\max_{\mathbf{x}\in K}\langle\mathbf{x},\mathbf{u}_\theta\rangle$. The
support line $L(\theta)$ is the line $\{\mathbf{x}\in \mathbb{R}^2:\langle\mathbf{x},\mathbf{u}_\theta\rangle = h(\theta)\}$,
and it intersects the boundary of $K$ but not its interior. In order for the critical
triangle to have maximal area among inscribed triangles, there must be a support line
through each of its vertices that is parallel to opposite side (see Figure \ref{critfig}).

\begin{figure}
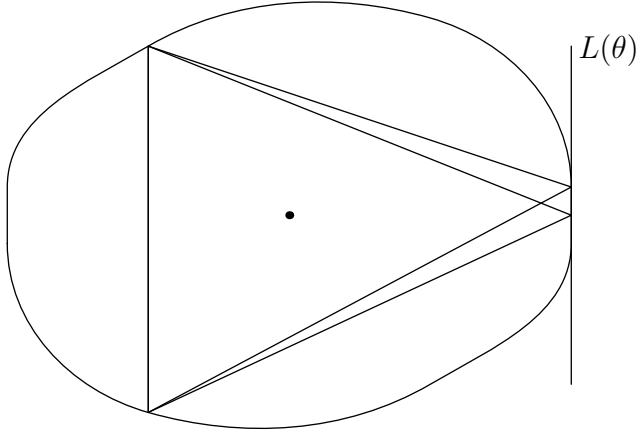
\begin{center}
\begin{asy}
import cseblack;
import olympiad;
usepackage("amssymb");
size(240);

path T1=D((1,-0.0)--(-0.5,-0.7)--(-0.5,0.6)--cycle);
path T2=D((1,0.1)--(-0.5,-0.7)--(-0.5,0.6)--cycle);
//path A=D((-1,-0.1)--(-1,0.1){0,1}..{1.2,0.7}(-0.5,0.6){1.2,0.7}..{1.5,-0.1}(0.5,0.7){1.5,-0.1}..{0,-1}(1,0.1)--(1,-0.1){0,-1}..{-1.2,-0.7}(0.5,-0.6){-1.2,-0.7}..{-1.5,0.1}(-0.5,-0.7){-1.5,0.1}..{0,1}(-1,-0.1));
path A=D((-1,-0.1)--(-1,0.1){0,1}..{1.2,0.7}(-0.5,0.6){1.2,0.7}::{1.5,-0.425}(0.5,0.7){1.5,-0.425}..{0,-1}(1,0.1)--(1,-0.1){0,-1}..{-1.2,-0.7}(0.5,-0.6){-1.2,-0.7}::{-1.5,0.425}(-0.5,-0.7){-1.5,0.425}..{0,1}(-1,-0.1));
MC("{\small L(\theta)}",D((1,0.6)--(1,-0.6)),0.08,NE);
D((0,0));
\end{asy}
\caption{\label{anchorfig}Two triangles anchored at $L(\theta)$: inscribed triangles with one vertex on $L(\theta)$
and one side parallel to $L(\theta)$, maximizing the area among all such triangles. As shown in Lemma \ref{reltri},
the base of all triangles anchored at a given support line is the same. Additionally, if the support line
intersects $K$ at a point, then there is a unique triangle anchored at $L(\theta)$.}
\end{center}\end{figure}

\begin{lem}\label{reltri}Let $L(\theta)$ be a support line of $K$. Consider all triangles $\mathbf{x}\mathbf{y}\mathbf{z}$ inscribed in $K$
in such a way that $\mathbf{x} \in L(\theta)$, and the side $\mathbf{y}\mathbf{z}$ is parallel to $L(\theta)$ (for definiteness,
let $\mathbf{x}\mathbf{y}\mathbf{z}$ be arranged counter-clockwise). There are points $\mathbf{y}_0$ and $\mathbf{z}_0$
such that a triangle of the type described maximizes the area amongst all such triangles if and only if $\mathbf{y}=\mathbf{y}_0$
and $\mathbf{z}=\mathbf{z}_0$.
\end{lem}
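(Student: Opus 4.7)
The plan is to reduce the problem to maximizing a one-parameter function and then to use convexity of $K$ to force a unique maximizer. For each $t\in[0,w]$, where $w=h(\theta)+h(\theta+\pi)$ is the width of $K$ in direction $\mathbf{u}_\theta$, let $L_t$ denote the line parallel to $L(\theta)$ at perpendicular distance $t$ on the interior side of $L(\theta)$, and let $\ell(t)$ be the length of the chord $K\cap L_t$. Because $\mathbf{y}\mathbf{z}\parallel L(\theta)$, the altitude of $\mathbf{x}\mathbf{y}\mathbf{z}$ from $\mathbf{x}$ to the line through $\mathbf{y}\mathbf{z}$ equals $t$ regardless of where $\mathbf{x}$ lies on $L(\theta)$; this is exactly what allows two maximizing triangles to coexist, differing only in $\mathbf{x}$, when $L(\theta)$ meets $K$ in a segment. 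For fixed $t$, the area $\tfrac12\,t\,|\mathbf{y}\mathbf{z}|$ is maximized precisely when $\mathbf{y},\mathbf{z}$ are the two endpoints of $K\cap L_t$, at which $|\mathbf{y}\mathbf{z}|=\ell(t)$. The lemma therefore reduces to showing that $f(t):=t\,\ell(t)$ attains its maximum on $(0,w]$ at a single value $t_0$; then $\mathbf{y}_0,\mathbf{z}_0$ are forced to be the two endpoints of $K\cap L_{t_0}$.

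The analytic input I would use is concavity of $\ell$ on $[0,w]$, a standard consequence of convexity of $K$: convex combinations of the endpoints of $K\cap L_{t_1}$ with those of $K\cap L_{t_2}$ (paired appropriately) lie in $K\cap L_{\lambda t_1+(1-\lambda)t_2}$. To obtain uniqueness of $t_0$, I would argue by contradiction: suppose $f$ attains its maximum $M>0$ at two distinct parameters $t_1<t_2$. Then $\ell(t_i)=M/t_i$, and concavity of $\ell$ at the midpoint gives
\[
f\!\left(\tfrac{t_1+t_2}{2}\right)\;\ge\;\tfrac{t_1+t_2}{2}\cdot\tfrac{\ell(t_1)+\ell(t_2)}{2}\;=\;\tfrac{M}{4}\!\left(2+\tfrac{t_1}{t_2}+\tfrac{t_2}{t_1}\right),
\]
which strictly exceeds $M$ by AM--GM since $t_1\ne t_2$, contradicting maximality.

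The main obstacle I anticipate is exactly this uniqueness step. Log-concavity of $f$ alone---immediate from $\log t+\log\ell(t)$ being a sum of two concave functions on $(0,w]$---only shows that the argmax is some subinterval, and one must exploit the specific form of $f$ (essentially the strict convexity of $1/t$) to force that interval to collapse to a single point. Everything else is routine: the independence of the area from the placement of $\mathbf{x}$ on $L(\theta)$, the reduction to chord maximization, and the identification of $\mathbf{y}_0,\mathbf{z}_0$ as the two endpoints of $K\cap L_{t_0}$.
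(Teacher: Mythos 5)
Your proof is correct and is essentially the argument in the paper: the paper likewise averages the altitudes of two putative maximizers, uses convexity of $K$ to bound the chord length at the mid-altitude from below by the average of the two base lengths, and derives the same contradiction from $2+\tfrac{t_1}{t_2}+\tfrac{t_2}{t_1}>4$ when $t_1\ne t_2$. Your reduction to the one-variable function $f(t)=t\,\ell(t)$ with $\ell$ concave is just a cleaner packaging of the same computation, and your observation that the maximal base at height $t_0$ must be the full chord plays the role of the paper's final convex-hull-of-the-two-bases step.
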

\begin{proof}
Let $\mathbf{x}\mathbf{y}\mathbf{z}$ and $\mathbf{x}'\mathbf{y}'\mathbf{z}'$ be two triangles maximizing the area
(see Figure \ref{anchorfig}).
First note that the area does not depend on the point lying on $L(\theta)$. Therefore, we may take $\mathbf{x}=\mathbf{x}'$
without loss of generality.
Let the altitudes of the two triangles taken from $\mathbf{x}$ be $h$ and $h'$, and their area be $A$, so that
their bases have lengths $2A/h$ and $2A/h'$.
By convexity, there is an inscribed triangle with base parallel to $L(\theta)$ of altitude $\tfrac{1}{2}(h+h')$ and
base length at least $A(\tfrac{1}{h}+\tfrac{1}{h'})$. Since its area,
$\tfrac{1}{4}A(2+\tfrac{h'}{h}+\tfrac{h}{h'})$, must not be greater than $A$, 
we have that the altitudes of the two triangles, and their base length, are equal.
If the two bases are not identical, then their convex hull gives the base of an inscribed
triangle of the type described with greater area. Therefore, the bases must be identical.\end{proof}

We denote by $A(\theta)$ the maximal area of a triangle of the type described in Lemma \ref{reltri}, and call
a triangle achieving this area a triangle \textit{anchored} at $L(\theta)$ (or simply at angle $\theta$).
Clearly, the set
of all triangles anchored at support lines of $K$ contains all the critical triangles of $K$ and
$\Delta(K)=2\max_{0 \le \theta < 2 \pi}A(\theta)=2 A_\mx$.
Note that any critical triangle
occurs as a triangle anchored at three angles (parallel to its three sides)
and its symmetric image occurs as a triangle anchored at three more angles.
We label these angles $\theta_i$, $i=1,\ldots,6$, such that $\theta_1<\theta_2<\ldots<\theta_6<\theta_1+2\pi$
and $\theta_{i+3}=\theta_i+\pi$ (the label $i$ is understood to extend cyclically such that $\theta_{i+6}=\theta_i+2\pi$).
%the points $\mathbf{u}_\theta$ are arranged counter-clockwise
%on the unit circle and $\theta_{i+3}=-\theta_i$ (the label $i$ is understood to extend beyond $1,\ldots,6$
%in such a way that $\theta_i=\theta_{i+6}$).
%Therefore, we associate each critical triangle paired with
%its symmetric image (for short, critical triangle pair) with the corresponding angles $\theta_1,\ldots\theta_6$ at
%which they are anchored.
We now show that the angles associated with two critical triangle intersperse.

\begin{figure}
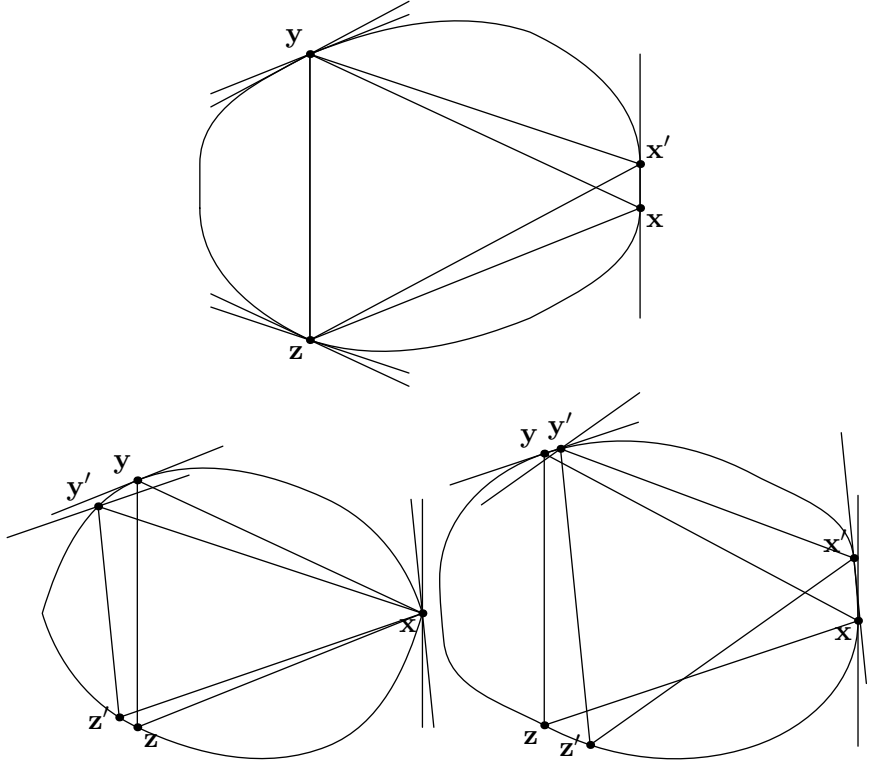
\begin{center}
\begin{asy}[width=0.5\textwidth]
import cseblack;
import olympiad;
usepackage("amssymb");
size(240);

path T1=D((1,-0.1)--(-0.5,-0.7)--(-0.5,0.6)--cycle);
path T2=D((1,0.1)--(-0.5,-0.7)--(-0.5,0.6)--cycle);
pair X=(1,-0.1); pair XP=(1,0.1);
pair Y=(-0.5,0.6); pair Z=(-0.5,-0.7);
path A=D((-1,-0.1)--(-1,0.1){0,1}..{XP-Z}Y{X-Z}..{XP-Y}(-Z){X-Y}..{0,-1}(1,0.1)--(1,-0.1){0,-1}..{Z-XP}(-Y){Z-X}..{Y-XP}(Z){Y-X}..{0,1}(-1,-0.1));
D((1,0.6)--(1,-0.6));
D(Y+0.3(XP-Z)--(Y-0.3(XP-Z)));
D(Y+0.3(X-Z)--(Y-0.3(X-Z)));
D(Z+0.3(XP-Y)--(Z-0.3(XP-Y)));
D(Z+0.3(X-Y)--(Z-0.3(X-Y)));
MP("\mathbf{x}",D((1,-0.1)),SE);
MP("\mathbf{x}'",D((1,0.1)),NE);
MP("\mathbf{z}",D((-0.5,-0.7)),SW);
MP("\mathbf{y}",D((-0.5,0.6)),NW);
\end{asy}
\linebreak
\begin{asy}[width=0.45\textwidth]
import cseblack;
import olympiad;
usepackage("amssymb");
size(240);

real r=0.65;
path T=D((1,0)--(-0.5,0.7)--(-0.5,-0.6)--cycle);
path A=D((1,0){-0.3,1}..{-1.5,0.7}(0.5,0.6){-1.5,0.7}..{-1.5,-0.6}(-0.5,0.7){-1.5,-0.6}..{-0.3,-1}(-1,0){0.3,-1}..{1.5,-0.7}(-0.5,-0.6){1.5,-0.7}..{1.5,0.6}(0.5,-0.7){1.5,0.6}..{0.3,1}(1,0));
pair V1=IP((-r+0.1,-1)--(-r,0),A);
pair V2=IP((-r-0.1,1)--(-r,0),A);
path T2=D((1,0)--V1--V2--cycle);
D((1,0.6)---(1,-0.6));
D((1.06,-0.6)---(0.94,0.6));
D( ((-0.5,0.7)+0.3*(-1.5,-0.6))--((-0.5,0.7)-0.3*(-1.5,-0.6)) );
pair S=V1-(1,0);
D( (V2+0.3*S)--(V2-0.3*S) );
MP("\mathbf{x}",D((1,0)),SW);
MP("\mathbf{y}",D((-0.5,0.7)),NW);
MP("\mathbf{y}'",D(V2),NW);
MP("\mathbf{z}",D((-0.5,-0.6)),SE);
MP("\mathbf{z}'",D(V1),W);
\end{asy}
\begin{asy}[width=0.45\textwidth]
import cseblack;
import olympiad;
usepackage("amssymb");
size(240);

real r=0.35;
path T=D((1,-0.1)--(-0.5,0.7)--(-0.5,-0.6)--cycle);
path A=D((1,-0.1){0,1}..{-0.1,1}(0.98,0.2){-0.1,1}..{-1.5,0.8}(0.5,0.6){-1.5,0.8}..{-1.5,-0.5}(-0.5,0.7){-1.5,-0.5}..{0,-1}(-1,0.1){0,-1}..{0.1,-1}(-0.98,-0.2){0.1,-1}..{1.5,-0.8}(-0.5,-0.6){1.5,-0.8}..{1.5,0.5}(0.5,-0.7){1.5,0.5}..{0,1}(1,-0.1));
pair V1=IP((-r+0.1,-1)--(-r,0),A);
pair V2=IP((-r-0.1,1)--(-r,0),A);
path T2=D((0.98,0.2)--V1--V2--cycle);
D((1,0.5)---(1,-0.7)); //support through x
D((1.04,-0.4)---(0.92,0.8)); //support through xp
D( ((-0.5,0.7)+0.3*(-1.5,-0.5))--((-0.5,0.7)-0.3*(-1.5,-0.5)) ); //support through y
pair S=V1-(0.98,0.2);
D( (V2+0.3*S)--(V2-0.3*S) );
MP("\mathbf{x}'",D((0.98,0.2)),NW);
MP("\mathbf{x}",D((1,-0.1)),SW);
MP("\mathbf{y}",D((-0.5,0.7)),NW);
MP("\mathbf{y}'",D(V2),N);
MP("\mathbf{z}",D((-0.5,-0.6)),SW);
MP("\mathbf{z}'",D(V1),W);
\end{asy}
\caption{\label{interspfig}Three cases of the relative situation
of two critical triangles of a domain, which occur in the proof of Lemma \ref{intersp}.
In the top panel, the triangles are anchored at a common
support line, and necessarily also share two vertices (see Lemma \ref{reltri}). As can be verified by inspection,
the angle triplets at which the two triangles are anchored intersperse. The bottom-left
panel illustrates the impossibility of the case that two critical triangles share a vertex
and the opposite sides do not intersect: the support line through $\mathbf{y}'$ must be parallel
to $\mathbf{x}\mathbf{z}'$, and therefore less steep than the support line through $\mathbf{y}$,
which is parallel to $\mathbf{x}\mathbf{z}$. However, from convexity of $K$, the opposite must also hold.
The bottom-right panel illustrates
the case that two critical triangles share no vertices or support lines, but their
vertices do not intersperse along the boundary. This case is also shown to be
impossible: the support line through $\mathbf{y}'$ must be parallel
to $\mathbf{x}'\mathbf{z}'$, and therefore steeper than the support line through $\mathbf{y}$,
but again, from convexity of $K$, the opposite must also hold.
}
\end{center}\end{figure}

\begin{lem}\label{intersp}Let $\theta_1,\theta_3$, and $\theta_5$ and $\theta_1',\theta_3',$ and $\theta_5'$
be the angles at which two distinct critical
triangles of $K$ arise, then there is some even integer $k$ such that
$\theta_1\le\theta_{k+1}'\le\theta_3\le\theta_{k+3}'\le\theta_5\le\theta_{k+5}'\le\theta_1+2\pi$.
%the unit circle on the arc between $\mathbf{u}_{\theta_i}$ and $\mathbf{u}_{\theta_{i+1}}$ for all $i=1,\ldots,6$.
\end{lem}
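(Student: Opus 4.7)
The strategy is to reduce the claim to an interlacing property of points on $\partial K$, and then carry out the case analysis sketched in Figure~\ref{interspfig}.

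First, since $K$ is convex, the outer unit normal rotates monotonically counter-clockwise as we traverse $\partial K$. Each of the six angles $\theta_1,\ldots,\theta_6$ is the outer normal direction at a corresponding vertex of the hexagon $\cnv(T,-T)$ inscribed in $K$, and likewise for $T'$. Consequently, the cyclic order of the twelve vertices on $\partial K$ matches the cyclic order of the twelve angles, and the lemma reduces to proving that the vertices of $\cnv(T,-T)$ and those of $\cnv(T',-T')$ alternate along $\partial K$ for some alignment (this alignment corresponding to the choice of even $k$).

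I would then split into two cases. In Case~1, $T$ and $T'$ are anchored at some common support line $L(\theta_0)$. Lemma~\ref{reltri} forces them to share the two base vertices, so $T=\mathbf{y}_0\mathbf{z}_0\mathbf{x}$ and $T'=\mathbf{y}_0\mathbf{z}_0\mathbf{x}'$ with distinct $\mathbf{x},\mathbf{x}'\in L(\theta_0)$. Alternation of the vertices of the two hexagons on $\partial K$ can then be read off directly, as illustrated in the top panel of Figure~\ref{interspfig}: the four base vertices $\pm\mathbf{y}_0,\pm\mathbf{z}_0$ are common, while the two pairs $\pm\mathbf{x}$ and $\pm\mathbf{x}'$ sit on a common pair of parallel support lines and interlace with them in the obvious way.

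In Case~2, $T$ and $T'$ share no support line, so all twelve outer normal directions are distinct. Suppose for contradiction that alternation fails; then there is an arc of $\partial K$ between two consecutive vertices $\mathbf{a},\mathbf{b}$ of $\cnv(T,-T)$ that contains two consecutive vertices $\mathbf{v},\mathbf{w}$ of $\cnv(T',-T')$. The outer normals at $\mathbf{a},\mathbf{v},\mathbf{w},\mathbf{b}$ are determined by parallelism to specific sides of $T$, $-T$, $T'$, or $-T'$, and by convexity they must rotate strictly counter-clockwise in that order. Chasing these parallelism constraints produces incompatible inequalities on a slope: the direction of the line joining the two $T'$-vertices opposite to $\mathbf{v}$ and $\mathbf{w}$ is forced by their positions outside the arc $[\mathbf{a},\mathbf{b}]$ to satisfy one inequality, while the required ccw rotation of the support lines at $\mathbf{v}$ and $\mathbf{w}$ forces the opposite one, as depicted in the bottom panels of Figure~\ref{interspfig}.

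The principal obstacle will be the geometric bookkeeping in Case~2, and I expect it is cleanest to treat separately the sub-case where $T$ and $T'$ happen to share a vertex (bottom-left of Figure~\ref{interspfig}) from the generic sub-case where they share neither vertex nor support line (bottom-right), since the two admit slightly different slope comparisons. Keeping track of which side of $T'$ is opposite $\mathbf{v}$ versus $\mathbf{w}$, and of the signs in the ccw-rotation condition, is the delicate step; once these are pinned down the contradiction becomes immediate.
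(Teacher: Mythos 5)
Your overall architecture matches the paper's: reduce the angle statement to interlacing of the twelve vertices on $\partial K$ via monotonicity of the support correspondence, dispose of the common-support-line case with Lemma~\ref{reltri}, and then rule out the non-alternating orderings in the remaining cases. Where you diverge is in the mechanism for the contradiction. The paper's proof runs both remaining sub-cases (shared vertex, and fully disjoint vertex sets) through a single algebraic tool, the Pentagon Lemma of Dobkin et al.\ (if $ABCDE$ is a convex pentagon and $\vll(ABD)\le\vll(ABE)$ then $\vll(ACD)<\vll(ACE)$), applied once in the shared-vertex case and twice in the disjoint case to produce $\vll(\mathbf{x}'\mathbf{y}'\mathbf{z})<\vll(\mathbf{x}'\mathbf{y}\mathbf{z})<\vll(\mathbf{x}'\mathbf{y}'\mathbf{z})$. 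Your slope-chasing argument is exactly the ``more visual argument'' that the paper relegates to the caption of Figure~\ref{interspfig}, so it is a legitimate route; what the Pentagon Lemma buys is uniformity --- one maximality inequality per triangle feeds directly into the lemma, with no tracking of which side is opposite which vertex or of the orientation of the rotation, which is precisely the bookkeeping you flag as delicate and leave open. Two further points you should not skip: the paper also needs, and uses, the fact that both critical triangles contain the origin to exclude the ordering in which the two triangles occupy disjoint arcs (your hexagon formulation partially absorbs this, but you should check that the slope comparison still closes there); and you must separately dispose of the degenerate situations --- $K$ a parallelogram, or three of the six vertices on a common boundary segment --- since there the support angle at a vertex is not unique and your strict slope inequalities can collapse to equalities. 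The paper excludes these explicitly before the case analysis begins.
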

\begin{proof}
Let $\mathbf{x},\mathbf{y},$ and $\mathbf{z}$ be the vertices incident on the support lines
$L(\theta_1),L(\theta_3),$ and $L(\theta_5)$ respectively, and similarly denote
by $\mathbf{x}',\mathbf{y}',$ and $\mathbf{z}'$ the vertices of the other critical triangle. 
If the two triangles are anchored at a common support line, then by Lemma \ref{reltri}
they also have two vertices in common, and the two remaining vertices lie on a line parallel
to the line connecting the shared vertices. The lemma in that case then follows easily (see top panel
of Figure \ref{interspfig}). Therefore, we may assume below that all six angles are distinct.
We also assume that the domain is not a parallelogram, for which the lemma can also
be easily checked to hold (the details are left to the reader). Therefore,
there is no line segment in the boundary of $K$ containing three of the six vertices.

We will make use of the fact that if $ABCDE$ is a non-degenerate convex pentagon, and
$\vll(ABD)\le\vll(ABE)$ then $\vll(ACD)<\vll(ACE)$. This fact is proved, for example,
in Ref. \cite{dobkin} under the name of the Pentagon Lemma. A more visual argument,
not using this lemma is given in Figure \ref{interspfig}.

First, consider the case that the triangles have a vertex in common,
which without loss of generality may be taken to be $\mathbf{x}=\mathbf{x}'$.
Since the triangles must intersect not only at a vertex (both must contain the origin),
the order of the vertices as we travel counter-clockwise around the boundary
cannot be $\mathbf{x}\mathbf{y}\mathbf{z}\mathbf{y}'\mathbf{z}'\mathbf{x}$ or
$\mathbf{x}\mathbf{y}'\mathbf{z}'\mathbf{y}\mathbf{z}\mathbf{x}$. If the order
is $\mathbf{x}\mathbf{y}\mathbf{y}'\mathbf{z}'\mathbf{z}\mathbf{x}$,
then since $\vll(\mathbf{x}\mathbf{y}\mathbf{z}')\le\vll(\mathbf{x}\mathbf{y}\mathbf{z})$,
the Pentagon Lemma gives that $\vll(\mathbf{x}\mathbf{y}'\mathbf{z}')<\vll(\mathbf{x}\mathbf{y}'\mathbf{z})$:
a contradiction. (see also Figure \ref{interspfig} for a visual
illustration of why this order is impossible.) The same argument, with the triangles transposed also eliminates
the order $\mathbf{x}\mathbf{y}'\mathbf{y}\mathbf{z}\mathbf{z}'\mathbf{x}$.
The order must then be $\mathbf{x}\mathbf{y}\mathbf{y}'\mathbf{z}\mathbf{z}'\mathbf{x}$,
or equivalently $\mathbf{x}\mathbf{y}'\mathbf{y}\mathbf{z}'\mathbf{z}\mathbf{x}$. 
Since we know that the angle of support lines through points on the boundary
cannot decrease as the point is moved counter-clockwise along the boundary, it is 
enough to observe that $\theta_1<\theta_1'$ in the first case, since the corresponding
support lines are parallel to $\mathbf{y}\mathbf{z}$ and $\mathbf{y}'\mathbf{z}'$,
and similarly that $\theta_1>\theta_1'$ in the other case.

Now, consider the case that none of the vertices coincide. We must eliminate all cases
in which one of the three arcs $\mathbf{x}\mathbf{y}$, $\mathbf{y}\mathbf{z}$, or $\mathbf{z}\mathbf{x}$
includes at least two of the vertices $\mathbf{x}'$, $\mathbf{y}'$, or $\mathbf{z}'$.
Since the triangles must intersect, it includes exactly two. Without loss of generally,
we may consider the case where the order is $\mathbf{x}\mathbf{x}'\mathbf{y}'\mathbf{y}\mathbf{z}\mathbf{z}'\mathbf{x}$. 
From the fact that $\vll(\mathbf{x}\mathbf{y}'\mathbf{z})\le\vll(\mathbf{x}\mathbf{y}\mathbf{z})$
and $\vll(\mathbf{x}'\mathbf{y}\mathbf{z}')\le\vll(\mathbf{x}'\mathbf{y}'\mathbf{z}')$,
the Pentagon Lemma gives that
$\vll(\mathbf{x}'\mathbf{y}'\mathbf{z})<\vll(\mathbf{x}'\mathbf{y}\mathbf{z})$ and
$\vll(\mathbf{x}'\mathbf{y}\mathbf{z})<\vll(\mathbf{x}'\mathbf{y}'\mathbf{z})$: a contradiction.
(see also Figure \ref{interspfig} for a visual illustration of why this order is impossible.)
Consequently, the vertices of the two triangles must alternate around the boundary, and the lemma
holds.
\end{proof}

%\begin{cor}The angles at which two distinct critical triangle pairs are anchored may be labeled
%in such a way that $\theta_1\le\theta_1'\le\theta_2\le\theta_2'\le\theta_3\le\ldots\le\theta_6\le\theta_6'\le\theta_1+2\pi$.
%\end{cor}

Intuitively, a domain can only be inextensible if it has a one parameter family of
critical lattices. Such a situation is needed and sufficient to make sure that any augmentation of the domain would allow one of the
critical lattices to increase in determinant. We prove this now.

\begin{thm}$K$ is inextensible if and only if $A(\theta)$ is constant.\label{inexthm}\end{thm}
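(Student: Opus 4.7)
The plan is to prove the two directions of the equivalence separately.

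For $(\Leftarrow)$, suppose $A(\theta)\equiv A_\mx$. For any proper superdomain $K'\supsetneq K$, choose $\mathbf{x}\in K'\setminus K$ and a direction $\theta$ with $\langle \mathbf{x},\mathbf{u}_\theta\rangle>h(\theta)$, so that $L(\theta)$ strictly separates $\mathbf{x}$ from $K$. The triangle $\mathbf{x}_0\mathbf{y}_0\mathbf{z}_0$ anchored at $L(\theta)$ has area $A_\mx$; replacing the apex $\mathbf{x}_0$ by $\mathbf{x}$ yields a triangle inscribed in $K'$ with the same base $\mathbf{y}_0\mathbf{z}_0$ but strictly greater altitude, hence strictly greater area. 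By the Bambah--Rogers identity, $\Delta(K')>\Delta(K)$, proving that $K$ is inextensible.

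For $(\Rightarrow)$, I would argue the contrapositive: if $A$ is non-constant, I construct a strict superdomain $K'\supsetneq K$ with $\Delta(K')=\Delta(K)$. The idea is to locate a boundary point $\mathbf{p}\in\partial K$ that is not a vertex of any critical triangle, so that the maximum area of a triangle inscribed in $K$ with apex $\mathbf{p}$ is strictly less than $A_\mx$. Given such $\mathbf{p}$ and a support direction $\theta_\mathbf{p}$ at $\mathbf{p}$, define
\[
K_\tau := \cnv\bigl(K \cup \{\pm(\mathbf{p}+\tau\mathbf{u}_{\theta_\mathbf{p}})\}\bigr)
\]
for small $\tau>0$, a centrally symmetric strict superdomain of $K$. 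Any triangle inscribed in $K_\tau$ either has all vertices in $K$ (with area $\le A_\mx$) or uses one of $\pm(\mathbf{p}+\tau\mathbf{u}_{\theta_\mathbf{p}})$ as a vertex; by joint continuity of the functional ``maximum area of inscribed triangle with given apex'' in the apex and in the domain (under Hausdorff distance), for $\tau$ small enough the area in the latter case is close to the maximum inscribed area in $K$ with apex $\mathbf{p}$, which is strictly below $A_\mx$. Hence $\Delta(K_\tau)\le 2A_\mx=\Delta(K)$, and combined with $\Delta(K_\tau)\ge\Delta(K)$ from $K_\tau\supseteq K$, equality holds, witnessing extensibility.

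The main obstacle is establishing the existence of a non-critical boundary point when $A$ is non-constant. Lemma \ref{intersp} constrains the set $\Theta:=A^{-1}(A_\mx)$ of anchor angles, which is closed by continuity of $A$. The informal picture is that if every smooth $\mathbf{p}\in\partial K$ were a critical vertex (so $\theta_\mathbf{p}\in\Theta$ there) and every corner's normal cone also intersected $\Theta$, then the family of critical triangles would be forced, via the interspersion structure of Lemma \ref{intersp} and closedness of $\Theta$, to cover every support direction, contradicting non-constancy of $A$. The delicate situation is when $K$ has a corner $\mathbf{c}$ whose normal cone straddles a gap of $\Theta$, making $\mathbf{c}$ itself a critical vertex: there, the naive point-addition at $\mathbf{c}$ would enlarge the critical triangle with apex $\mathbf{c}$, so $\mathbf{p}$ must be chosen in a different gap of $\Theta$ whose support set contains a non-critical point (a smooth point or the interior of a flat side).
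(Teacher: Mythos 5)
Your $(\Leftarrow)$ direction (constant $A$ implies inextensible) is correct and is essentially the paper's argument in contrapositive form: a point of $K'\setminus K$ beyond some support line $L(\theta)$ lets you raise the apex of the triangle anchored there, so $\Delta(K')>2A_\mx=\Delta(K)$. (One small repair: a triangle inscribed in $K_\tau$ or $K'$ need not have its vertices in $K\cup\{\pm\mathbf{p}_\tau\}$; you should restrict to \emph{maximal-area} inscribed triangles, whose vertices may be taken among extreme points. The same remark applies to your case split in the other direction, and the continuity argument you invoke there is fine.)

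The $(\Rightarrow)$ direction, however, has a genuine gap, and it sits exactly where you flag ``the main obstacle.'' Your construction $K_\tau=\cnv(K\cup\{\pm\mathbf{p}_\tau\})$ works \emph{once} you have a boundary point $\mathbf{p}$ at which the maximal inscribed triangle with apex $\mathbf{p}$ has area strictly below $A_\mx$; but the existence of such a point when $A$ is non-constant is the entire content of this direction, and your proposal replaces the proof with an ``informal picture.'' The dangerous configuration you identify --- a corner whose normal cone straddles a gap of $\Theta=A^{-1}(A_\mx)$, so that the corner is itself a critical vertex --- is precisely the case the paper must (and does) rule out, and a priori \emph{every} gap of $\Theta$ could be of this type, so ``choose $\mathbf{p}$ in a different gap'' is not available without further argument. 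The paper's resolution is a dichotomy: if some submaximal angle has a \emph{tangent} support line, one obtains two support lines $L(\theta'),L(\theta'')$ with disjoint contact sets and $A<A_\mx$ on $[\theta',\theta'']$, and one can bulge out the whole arc between them (your construction would also work there); if instead every submaximal angle has a non-tangent support line, then each maximal gap $(\theta_1,\theta_1')$ of $\Theta$ consists of support lines through a single corner $\mathbf{x}$, which is a common vertex of two critical triangles anchored at $\theta_1$ and $\theta_1'$. Lemma \ref{intersp} then forces the other two pairs of anchoring angles to bound intervals of submaximal angles as well, hence also non-tangent, hence the remaining vertices of the two triangles coincide pairwise --- contradicting that the triangles are distinct. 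Without this use of the interspersing lemma (or an equivalent argument), your proof of the converse is incomplete.
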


\begin{proof}
First assume that $K$ is extensible. Then there exists a proper superdomain $K'\supset K$ such that
$\max A'(\theta)=\max A(\theta)$. There exists a support line $L(\theta_0)$ of $K$ which intersects
the interior of $K'$. Since the triangle anchored at $L'(\theta_0)$ must be of greater area
than the triangle anchored at $L(\theta_0)$, we have $A(\theta_0)<A'(\theta_0)\le\max A'(\theta)=\max A(\theta)$,
and $A(\theta)$ is not constant.

\begin{figure}
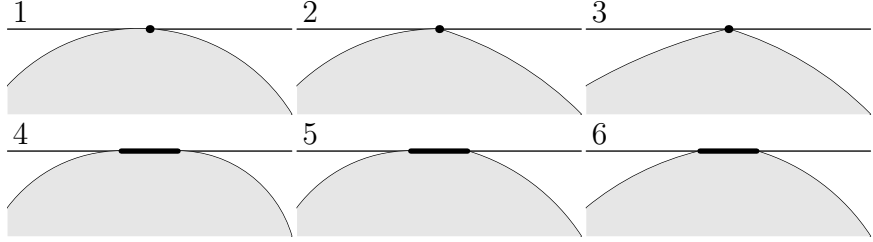
\begin{center}
\begin{asy}[width=0.3\textwidth]
import cseblack;
import olympiad;
usepackage("amssymb");
size(240);

MC("1",D((-1,0)--(1,0)),0.1,NW);
D((-1,-0.4)..(0,0)..(1,-0.6));
fill((-1,-0.6)--(-1,-0.4)..(0,0)..(1,-0.6)--cycle,lightgray);
D((0,0));
\end{asy}
\begin{asy}[width=0.3\textwidth]
import cseblack;
import olympiad;
usepackage("amssymb");
size(240);

MC("2",D((-1,0)--(1,0)),0.1,NW);
D((-1,-0.4)..{1,0}(0,0){1,-0.30}..(1,-0.6));
fill((-1,-0.6)--(-1,-0.4)..{1,0}(0,0){1,-0.30}..(1,-0.6)--cycle,lightgray);
D((0,0));
\end{asy}
\begin{asy}[width=0.3\textwidth]
import cseblack;
import olympiad;
usepackage("amssymb");
size(240);

MC("3",D((-1,0)--(1,0)),0.1,NW);
D((-1,-0.4)..{1,0.24}(0,0){1,-0.30}..(1,-0.6));
fill((-1,-0.6)--(-1,-0.4)..{1,0.24}(0,0){1,-0.30}..(1,-0.6)--cycle,lightgray);
D((0,0));
\end{asy}
\linebreak
\begin{asy}[width=0.3\textwidth]
import cseblack;
import olympiad;
usepackage("amssymb");
size(240);

MC("4",D((-1,0)--(1,0)),0.1,NW);
D((-1,-0.4)..{1,0}(-0.2,0)--(0.2,0){1,0}..(1,-0.6));
fill((-1,-0.6)--(-1,-0.4)..{1,0}(-0.2,0)--(0.2,0){1,0}..(1,-0.6)--cycle,lightgray);
D((-0.2,0)--(0.2,0),linewidth(2));
\end{asy}
\begin{asy}[width=0.3\textwidth]
import cseblack;
import olympiad;
usepackage("amssymb");
size(240);

MC("5",D((-1,0)--(1,0)),0.1,NW);
D((-1,-0.4)..{1,0}(-0.2,0)--(0.2,0){1,-0.3}..(1,-0.6));
fill((-1,-0.6)--(-1,-0.4)..{1,0}(-0.2,0)--(0.2,0){1,-0.3}..(1,-0.6)--cycle,lightgray);
D((-0.2,0)--(0.2,0),linewidth(2));
\end{asy}
\begin{asy}[width=0.3\textwidth]
import cseblack;
import olympiad;
usepackage("amssymb");
size(240);

MC("6",D((-1,0)--(1,0)),0.1,NW);
D((-1,-0.4)..{1,0.24}(-0.2,0)--(0.2,0){1,-0.3}..(1,-0.6));
fill((-1,-0.6)--(-1,-0.4)..{1,0.24}(-0.2,0)--(0.2,0){1,-0.3}..(1,-0.6)--cycle,lightgray);
D((-0.2,0)--(0.2,0),linewidth(2));
\end{asy}
\caption{\label{tangentfig}Six possible types of support lines: (1)
intersecting the boundary at a point and tangent to the boundary on both sides, (2)
intersecting the boundary at a point and tangent on one side, (3) intersecting
the boundary at a point and tangent on neither side, (4) intersecting
the boundary at a segment and tangent on both sides of the segment,
(5) intersecting the boundary at a segment and tangent on one side of the segment,
and (6) intersecting the boundary at a segment and tangent on neither side of the segment.
For the purposes of the proof of Theorem \ref{inexthm}, we call all the above cases
\textit{tangent} except for the case (3).}
\end{center}\end{figure}

For the converse assume that $A(\theta)$ is not constant. Note that $A(\theta)$ must be continuous
in $\theta$, so the set of angles such that $A(\theta)<A_\mx$ is open. Consider those support
lines $L(\theta)$ of $K$ that are not tangent to $K$ on either side of their intersection with the
boundary, that is, support lines such that support lines for sufficiently nearby angles have the same
intersection with the boundary. Call support lines of this type non-tangent
and call all other support lines tangent (see Figure \ref{tangentfig}). 

We assume first that there is an angle $\theta$ such that $L(\theta)$ is
tangent and $A(\theta)<A_\mx$. Since sufficiently nearby angles
also have suboptimal anchored triangles, we must have two support lines $L(\theta')$ and $L(\theta'')$,
whose intersections with the boundary of $K$ are disjoint, and such that for all $\theta\in[\theta',\theta'']$,
we have $A(\theta)<A_\mx$. Therefore, by filling in the area between $K$, $L(\theta')$ and $L(\theta'')$,
we may construct superdomains $K'_\eps\supset K$ with the same support lines
as $K$ for angles outside $[\theta',\theta'']$ (and the symmetric interval $[-\theta',-\theta'']$) and support lines that are 
moved by no more than $\eps$ for the angles in $[\theta',\theta'']$ (see Figure \ref{bumpfig}).
It is clear that $\Delta(K'_\eps)=\Delta(K)$ for small enough $\eps$
and $K$ is extensible. Otherwise, there must be for all $\eps>0$
a critical triangle of $K'_\eps$ of area greater than $A_\mx$ and with a vertex
on the part of the boundary of $K'_\eps$ that is not shared with $K$. The existence
of these triangles would imply the existence of an anchored triangle for $K$
anchored at an angle $\theta\in[\theta',\theta'']$ and with area at least $A_\mx$, and is therefore impossible.

\begin{figure}
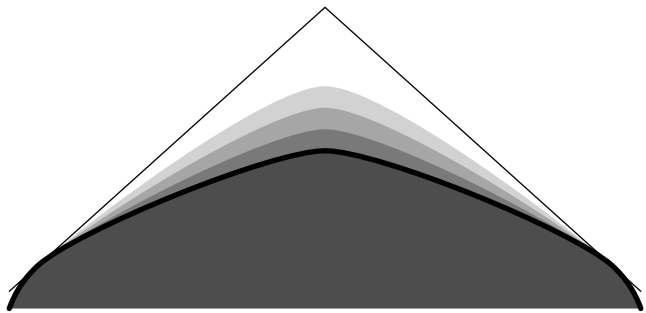
\begin{center}
\begin{asy}
import cseblack;
import olympiad;
usepackage("amssymb");
size(240);

pair X=(-1,0);
pair Y=(1,0);
fill(X{1,0.9}..tension atleast 2 ..(0,0.625)..tension atleast 2 ..{1,-0.9}Y--cycle,gray(0.825));
fill(X{1,0.9}..tension atleast 2 ..(0,0.55)..tension atleast 2 ..{1,-0.9}Y--cycle,gray(0.65));
fill(X{1,0.9}..tension atleast 2 ..(0,0.475)..tension atleast 2 ..{1,-0.9}Y--cycle,gray(0.475));
fill((-1.1,-0.15)..{1,0.9}X{1,0.9}..tension atleast 2 ..(0,0.4)..tension atleast 2 ..{1,-0.9}Y{1,-0.9}..(1.1,-0.15)--cycle,gray(0.3));
D((-1.1,-0.15)..{1,0.9}X{1,0.9}..tension atleast 2 ..(0,0.4)..tension atleast 2 ..{1,-0.9}Y{1,-0.9}..(1.1,-0.15),linewidth(2));
D((-1.1,-0.09)--X--(0,0.9)--Y--(1.1,-0.09));
\end{asy}
\caption{\label{bumpfig}If the intersections
%the triangles anchored at all angles in an interval $[\theta',\theta'']$
%are smaller in area than the critical triangle, and 
of the support lines $L(\theta')$ and $L(\theta'')$ (thin lines) with the boundary of the
domain $K$ (darkest gray, boundary in thick line) are disjoint,
then we can construct superdomains (shades of gray) whose support lines are identical outside the intervals
$[\theta',\theta'']$ and $[-\theta',-\theta'']$, and arbitrarily close to the original
support lines inside the intervals.}
\end{center}\end{figure}

On the other hand, assume that all angles for which $A(\theta)$ is submaximal
have support lines that are non-tangent. Let us consider an angle $\theta$ for which
$A(\theta)$ is submaximal and denote as $\mathbf{x}$ the point at which $L(\theta)$
intersects $\partial K$. By continuity, $\theta$ must be part of an open interval
$(\theta_1,\theta_1')$ such that $A(\theta_1)=A(\theta_1')=A_\mx$ and $A(\theta')<A_\mx$ for
all $\theta'\in(\theta_1,\theta_1')$. Since all angles $\theta'\in(\theta_1,\theta_1')$
are submaximal, their support lines are non-tangent, and therefore both $L(\theta_1)$
and $L(\theta_1')$ contain $\mathbf{x}\in\partial K$, and $\mathbf{x}$ is a vertex
of two critical triangles anchored at $L(\theta_1)$ and $L(\theta_1')$.
Denote the other two angles anchoring each
of these triangles as $\theta_3$ and $\theta_5$ and $\theta_3'$ and $\theta_5'$.
Consider the intervals $(\theta_3,\theta_3')$ and $(\theta_5,\theta_5')$. By
the interspersing property (Lemma \ref{intersp}), $A(\theta)$ is submaximal
at all angles of these intervals, and therefore all angles of these intervals
must have non-tangent support lines. It follows that all vertices of the two
critical triangles coincide. Since this is a contradiction, the case where all
submaximal angles have non-tangent support lines is impossible.
\end{proof}

Genin and Tabachnikov have observed that convex domains, not necessarily symmetric, such that 
each support line anchors a critical triangle can be characterized by the equivalent
property that each point on its boundary lies on an
outer billiard triangle, that is, a circumscribed triangle such that
the midpoint of each side is on the boundary of the domain \cite{genin}.
For conciseness, we will say that a domain (again, not necessary symmetric)
that possesses these equivalent properties has a circle of critical triangles.
The theorem identifies the family of inextensible symmetric convex domains
with the family of symmetric convex domains with a circle of critical
triangles. Examples of inextensible domains include, in addition to the
parallelogram and ellipse, the regular $6n+4$-gons. Another example of a
family of inextensible domains interpolating between the disk
and the square is given in Figure \ref{inexfig}.

\begin{figure}
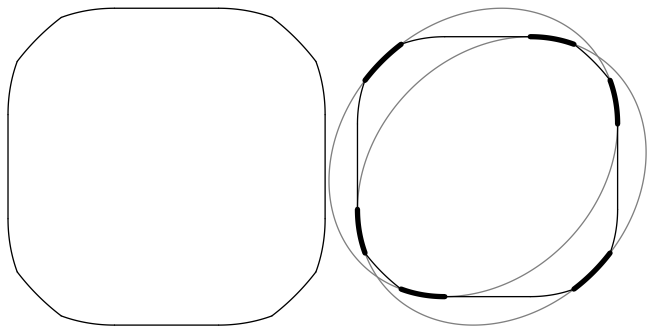
\begin{center}
\begin{asy}
%\begin{comment}
import cseblack;
import olympiad;
usepackage("amssymb");
size(120);

real T=0.0993399*180/3.14159;
pair A=(-0.0996729,0.0996729);
pair B=(-0.0996729,-0.0996729);
real SA=1.105171;
real SB=0.904837;
real X=0.910311;
real Y=0.299019;

D(shift(A)*rotate(45)*scale(SA,SB)*CR((0,0),1,-45+120+T,-45+270-(120+T)));
D(shift(A)*rotate(45)*scale(SA,SB)*CR((0,0),1,-45+T,-45-120+270-(120+T)));
D(shift(A)*rotate(45)*scale(SA,SB)*CR((0,0),1,-45+240+T,-45+120+270-(120+T)));

D(shift(-A)*rotate(45)*scale(SA,SB)*CR((0,0),1,180-45+120+T,180-45+270-(120+T)));
D(shift(-A)*rotate(45)*scale(SA,SB)*CR((0,0),1,180-45+T,180-45-120+270-(120+T)));
D(shift(-A)*rotate(45)*scale(SA,SB)*CR((0,0),1,180-45+240+T,180-45+120+270-(120+T)));

D(shift(B)*rotate(135)*scale(SA,SB)*CR((0,0),1,-45+120+T,-45+270-(120+T)));
D(shift(B)*rotate(135)*scale(SA,SB)*CR((0,0),1,-45+T,-45-120+270-(120+T)));
D(shift(B)*rotate(135)*scale(SA,SB)*CR((0,0),1,-45+240+T,-45+120+270-(120+T)));

D(shift(-B)*rotate(135)*scale(SA,SB)*CR((0,0),1,180-45+120+T,180-45+270-(120+T)));
D(shift(-B)*rotate(135)*scale(SA,SB)*CR((0,0),1,180-45+T,180-45-120+270-(120+T)));
D(shift(-B)*rotate(135)*scale(SA,SB)*CR((0,0),1,180-45+240+T,180-45+120+270-(120+T)));

D((X,Y)--(X,-Y));
D((-X,Y)--(-X,-Y));
D((Y,X)--(-Y,X));
D((Y,-X)--(-Y,-X));
\end{asy}
%\end{comment}
%\includegraphics{nonsym-2}
\begin{asy}
%\begin{comment}
import cseblack;
import olympiad;
usepackage("amssymb");
size(120);

real T=0.0993399*180/3.14159;
pair A=(-0.0996729,0.0996729);
pair B=(-0.0996729,-0.0996729);
real SA=1.105171;
real SB=0.904837;
real X=0.910311;
real Y=0.299019;
D(shift(A)*rotate(45)*scale(SA,SB)*CR((0,0),1,0,360),grey);
D(shift(-A)*rotate(45)*scale(SA,SB)*CR((0,0),1,0,360),grey);

D(shift(A)*rotate(45)*scale(SA,SB)*CR((0,0),1,-45+120+T,-45+270-(120+T)),linewidth(2));
D(shift(A)*rotate(45)*scale(SA,SB)*CR((0,0),1,-45+T,-45-120+270-(120+T)),linewidth(2));
D(shift(A)*rotate(45)*scale(SA,SB)*CR((0,0),1,-45+240+T,-45+120+270-(120+T)),linewidth(2));

D(shift(-A)*rotate(45)*scale(SA,SB)*CR((0,0),1,180-45+120+T,180-45+270-(120+T)),linewidth(2));
D(shift(-A)*rotate(45)*scale(SA,SB)*CR((0,0),1,180-45+T,180-45-120+270-(120+T)),linewidth(2));
D(shift(-A)*rotate(45)*scale(SA,SB)*CR((0,0),1,180-45+240+T,180-45+120+270-(120+T)),linewidth(2));

D(shift(B)*rotate(135)*scale(SA,SB)*CR((0,0),1,-45+120+T,-45+270-(120+T)));
D(shift(B)*rotate(135)*scale(SA,SB)*CR((0,0),1,-45+T,-45-120+270-(120+T)));
D(shift(B)*rotate(135)*scale(SA,SB)*CR((0,0),1,-45+240+T,-45+120+270-(120+T)));

D(shift(-B)*rotate(135)*scale(SA,SB)*CR((0,0),1,180-45+120+T,180-45+270-(120+T)));
D(shift(-B)*rotate(135)*scale(SA,SB)*CR((0,0),1,180-45+T,180-45-120+270-(120+T)));
D(shift(-B)*rotate(135)*scale(SA,SB)*CR((0,0),1,180-45+240+T,180-45+120+270-(120+T)));

D((X,Y)--(X,-Y));
D((-X,Y)--(-X,-Y));
D((Y,X)--(-Y,X));
D((Y,-X)--(-Y,-X));
\end{asy}
%\end{comment}
\caption{\label{inexfig}
An example of an inextensible domain (left). The boundary of the domain is composed
of 4 line segments and 12 arcs belonging to 4 ellipses. The three arcs
belonging to each ellipse are traced out by the vertices of critical triangles
of that ellipse (right). This construction yields a one-parameter family 
of inextensible domains interpolating between the disk and the square.}
\end{center}\end{figure}

Genin and Tabachnikov conjecture that, among convex domains with
a circle of critical triangles of a given area, the ellipse maximizes
the area of the domain. This in fact follows easily from the following
theorem of Sas:

\begin{thm}(Sas \cite{sas}) Let $K$ be a convex domain and $T_n$ the
inscribed $n$-gon of maximal area in $K$, then
$$\frac{\vll T_n}{\vll K} \ge \frac{n}{2\pi}\sin\frac{2\pi}{n}$$
with equality for, and only for, an ellipse.
\end{thm}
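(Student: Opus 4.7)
The plan is to combine the affine invariance of the ratio $\vll T_n / \vll K$ with a cap-by-cap comparison against circular segments. Since both $\vll T_n$ and $\vll K$ scale by the same Jacobian under a nonsingular affine map, the ratio is affine invariant, and I may apply an affine transformation sending $T_n$ to the regular $n$-gon $R_n$ inscribed in the unit disk $D$. The inequality then reduces to showing $\vll K \le \pi$, with equality only if $K = D$ (so that $K$ is an ellipse in the original coordinates).

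Next I would exploit the maximality of $T_n$: varying a single vertex $v_i$ along $\partial K$ cannot increase the inscribed area, so the tangent to $\partial K$ at each $v_i$ is parallel to the chord $v_{i-1}v_{i+1}$. Under the normalization above, these are exactly the tangent directions of $\partial D$ at the vertices of $R_n$. I then decompose $\vll K - \vll R_n = \sum_{i=1}^n \vll C_i$ into cap areas, where $C_i$ is the region between the chord $v_iv_{i+1}$ and the arc of $\partial K$ cut off by it; similarly $\vll D - \vll R_n = \sum_i \vll S_i$ where $S_i$ is the corresponding circular segment of $D$. It suffices to prove $\vll C_i \le \vll S_i$ for each $i$.

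The main obstacle is that first-order tangency alone is insufficient: the largest convex cap over a fixed chord with prescribed endpoint tangents is the full triangle bounded by the two tangent lines, whose area strictly exceeds that of the circular segment. One must therefore invoke the global maximality of $T_n$ in a nontrivial way. I would argue by contradiction: if some cap $C_i$ bulged beyond its circular counterpart $S_i$, then a carefully chosen variation -- replacing one or more of the other vertices $v_j$ of $T_n$ by boundary points of $K$ placed nearer to the bulge -- would produce an inscribed $n$-gon of strictly greater area than $T_n$. Quantifying this local-to-global trade-off, and verifying that the sharp constant $\tfrac{n}{2\pi}\sin\tfrac{2\pi}{n}$ is exactly the threshold at which such a variation becomes impossible, is the crux of Sas's argument and the hardest part of the plan.

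For the equality statement, every cap inequality must be tight, forcing each arc of $\partial K$ between consecutive $v_i$ to coincide with the circular arc of $\partial D$; the matching tangent conditions at the shared endpoints then glue these arcs into a single circle, so $K = D$, and hence $K$ is an ellipse in the original coordinates. Uniqueness up to affine transformation follows from uniqueness of the normalization.
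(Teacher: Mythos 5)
The paper does not prove this theorem at all: it is quoted with attribution to Sas and used as a black box, so there is no internal proof to compare against. Judged on its own merits, your proposal has two genuine gaps. First, the opening normalization fails for $n\ge 4$: a planar affine map has only six degrees of freedom, so you cannot in general send the maximal inscribed $n$-gon $T_n$ to the regular $n$-gon $R_n$ --- the maximal inscribed $n$-gon of an arbitrary convex domain need not be affinely regular. Your reduction to ``$\vll K\le\pi$'' is therefore available only for $n=3$. Second, and more seriously, even for $n=3$ the chord-by-chord inequality $\vll C_i\le \vll S_i$ is not something you establish, and there is no reason it should hold individually: tangency at the vertices plus global maximality controls only the \emph{sum} of the cap areas, since $K$ can bulge past the circular segment over one chord while receding over another. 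You acknowledge that the ``local-to-global trade-off'' needed to repair this is exactly what you have not supplied, so the crux of the theorem is assumed rather than proved, and the equality analysis inherits the same gap because it requires each cap inequality to be individually tight.

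For reference, Sas's actual argument sidesteps both problems with an averaging device: normalize an affine diameter of $K$ to the segment $[-1,1]$, write the upper and lower boundary arcs as functions over $[-1,1]$, and for each $\tau$ form the inscribed $n$-gon whose vertices lie over the abscissas $\cos(\tau+2\pi k/n)$, taken on the upper or lower arc according to the sign of $\sin(\tau+2\pi k/n)$. A direct computation shows that the average over $\tau\in[0,2\pi)$ of the areas of these polygons equals exactly $\frac{n}{2\pi}\sin\frac{2\pi}{n}\,\vll K$, so the maximal inscribed $n$-gon is at least that large; equality forces all polygons in the family to have equal area, which in turn forces $K$ to be an ellipse. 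If you only need the case the paper actually uses ($n=3$, via the corollary on critical triangles), you would still have to replace the cap-by-cap comparison with such a global argument.
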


\begin{cor}Let $K$ be a convex domain with a circle of critical triangles
of area $A$, then $\vll K \le 4\pi A/\sqrt{27}$, with equality for,
and only for, an ellipse.
\end{cor}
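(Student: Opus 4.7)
The plan is to observe that the corollary is nothing more than Sas's theorem specialized to $n=3$, once we unpack the hypothesis. First I would note that if $K$ has a circle of critical triangles of area $A$, then in particular $A$ is the maximal area of an inscribed triangle in $K$: the critical triangles are by definition the maximum-area inscribed triangles, and the hypothesis ensures such a triangle is anchored at every support line, all of them sharing the common area $A$. So $\vll(T_3) = A$ in Sas's notation.

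Next I would substitute $n=3$ into Sas's inequality:
\begin{equation*}
\frac{A}{\vll K} = \frac{\vll T_3}{\vll K} \ge \frac{3}{2\pi}\sin\frac{2\pi}{3} = \frac{3\sqrt{3}}{4\pi} = \frac{\sqrt{27}}{4\pi}.
\end{equation*}
Rearranging gives $\vll K \le 4\pi A/\sqrt{27}$, which is the desired inequality.

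For the equality case, the ``only if'' direction is immediate from the equality clause of Sas's theorem: $\vll K = 4\pi A/\sqrt{27}$ forces $K$ to be an ellipse. For the ``if'' direction, I would verify that the ellipse indeed has a circle of critical triangles and that equality is achieved: by affine equivalence it suffices to check this for the disk, for which the critical (maximum-area inscribed) triangles are precisely the equilateral triangles inscribed in the boundary circle, one passing through each boundary point, all of common area $\tfrac{3\sqrt 3}{4}r^2$, which matches $\vll(D) = \pi r^2$ through the relation $\pi r^2 = 4\pi/\sqrt{27}\cdot \tfrac{3\sqrt 3}{4}r^2$.

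There is essentially no obstacle here; the only mild point is making sure the reader sees that ``circle of critical triangles of area $A$'' really does pin down $\vll T_3 = A$, rather than merely bounding it, so that Sas's theorem can be applied as an equality of quantities rather than as a one-sided comparison.
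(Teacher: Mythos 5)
Your proof is correct and is exactly the argument the paper intends: the corollary is stated without a separate proof precisely because it is Sas's theorem with $n=3$, using $\sin(2\pi/3)=\sqrt{3}/2$ and $3\sqrt{3}=\sqrt{27}$, together with the observation that the critical triangles are by definition the maximum-area inscribed triangles so that $\vll T_3 = A$. Your added check that the ellipse does possess a circle of critical triangles achieving equality is a reasonable bit of diligence the paper leaves implicit.
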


\bibliographystyle{amsplain}
\bibliography{plane}

\end{document}